\def\newmathop#1{\expandafter\gdef\csname #1\endcsname{\mathop{\rm #1}\nolimits}}
\def\newvmathop#1{\expandafter\gdef\csname v#1\endcsname{\mathop{\rm #1}\nolimits}}
\theoremstyle{plain}
\newcounter{thmcount}[section]
\newtheorem{theorem}[thmcount]{Theorem}
\newtheorem{lemma}[thmcount]{Lemma}
\newtheorem{proposition}[thmcount]{Proposition}
\newtheorem*{conjecture*}{Conjecture}
\newtheorem*{theorem*}{Theorem}
\newtheorem*{corollary*}{Corollary}
\newcommand{\thistheoremname}{}
\newtheorem{genericthm}[thmcount]{\thistheoremname}
\newtheorem*{genericthm*}{\thistheoremname}
\newenvironment{namedthm*}[1]
  {\renewcommand{\thistheoremname}{#1}%
   \begin{genericthm*}}
  {\end{genericthm*}}
\theoremstyle{definition}
\newtheorem{remark}[thmcount]{Remark}
\newtheorem*{example}{Example}
\newtheorem*{remark*}{Remark}
\newtheorem{definition}[thmcount]{Definition}
\numberwithin{equation}{section}
\def\Q{{\mathbb Q}}
\def\F{{\mathbb F}}
\def\Qp{{{\mathbb Q}_p}}
\def\Qb{\overline{\mathbb Q}}
\def\Qpb{{\overline{\mathbb Q}_p}}
\def\Z{{\mathbb Z}}
\def\C{{\mathbb C}}
\newcommand{\Serreweight}{\sigma}
\newcommand{\BreuilMezard}{\text{BM}}
\newcommand{\DefRing}{R(k,\rho)}
\newcommand{\Localrep}{r}
\DeclareMathOperator{\JH}{JH}
\renewcommand{\det}{\operatorname{det}}
\newcommand{\mtwo}[4]{\begin{pmatrix} #1 & #2  \\ #3 & #4 \end{pmatrix}}
\newcommand{\Sym}{\text{Sym}}
\DeclareFontFamily{U}{wncy}{}
\DeclareFontShape{U}{wncy}{m}{n}{<->wncyr10}{}
\DeclareSymbolFont{mcy}{U}{wncy}{m}{n}
\DeclareMathSymbol{\Sha}{\mathord}{mcy}{"58}
\newsavebox\MBox
\begin{document}

\title{Serre weights and the Breuil--M\'{e}zard conjecture for modular forms}

\begin{abstract}
Serre's strong conjecture, now a theorem of Khare and Wintenberger, states that every two-dimensional continuous, odd, irreducible mod $p$ Galois representation $\rho$ arises from a modular form of a specific minimal weight $k(\rho)$, level $N(\rho)$ and character $\epsilon(\rho)$. In this short paper we show that the minimal weight $k(\rho)$ is equal to a notion of minimal weight inspired by the recipe for weights introduced by Buzzard, Diamond and Jarvis in \cite{BDJ}. Moreover, using the Breuil--M\'{e}zard conjecture we show that both weight recipes are equal to the smallest $k \geq 2$ such that $\rho$ has a crystalline lift of Hodge--Tate type $(0,k-1)$.
\end{abstract}

\author{Hanneke Wiersema}
\address{King's College London, Strand, London WC2R 2LS, UK}
\email{hanneke.wiersema@kcl.ac.uk}

\maketitle

\tableofcontents
In 1973, Serre conjectured that all continuous, odd, irreducible mod $p$ representations $\rho$ of the absolute Galois group $G_{\Q}$ of the rationals arise from modular forms. In 1987, Serre published a paper \cite{serreduke} in which he specified the minimal weight $k(\rho)$, minimal level $N(\rho)$ and character $\epsilon(\rho)$ for each $\rho$, such that there should be a form $f$ with these invariants that give rise to $\rho$. The conjecture has since been proven by Khare and Wintenberger, building on work of many others \cite{khare}. 
\par 
There has been substantial interest in generalising Serre's original conjectures. One of the directions this can take is by replacing $G_{\Q}$ by $G_K$, where $K$ is a totally real field. It is a folklore conjecture, stated by Buzzard, Diamond and Jarvis (\cite[Conjecture 1.1]{BDJ}), that representations
\[
\rho: G_{K} \to \GL_2(\overline{\F}_p),
\]
which are continuous, irreducible and totally odd, arise from \emph{Hilbert} modular forms. This setting is beyond the scope of Serre's original conjectures, and in particular there is no explicit description for the weights such forms should have. However, there exists a recipe for a generalised Serre's conjecture for something called \emph{algebraic} modularity. This involves a reformulation of modularity in terms of representation theory, first suggested in work of Ash and Stevens in \cite{ash}. For $K$ a totally real field with $p$ unramified and $\rho$ as above, Buzzard, Diamond and Jarvis define what it means for $\rho$ to be modular in this sense  \cite[Definition 2.1]{BDJ}. The weights we are interested in in this context are irreducible $\overline{\F}_{p}$-representations of $\GL_2(\mathcal{O}_K/p)$ and are called Serre weights.

\par
Moreover, given a representation $\rho$ as above, in the same paper, Buzzard, Diamond and Jarvis give an explicit recipe for all possible weights belonging to $\rho$. The conjecture that is presented in \cite{BDJ} predicts the set of Serre weights $\Serreweight$ for which $\rho$ is modular in this sense, assuming the folklore conjecture. This Serre weight conjecture conjecture has now been proven in \cite{geekisin}, \cite{GLS14} and in \cite{newton}, building on \cite{GLS14}, under some hypotheses. However, for the case relevant to us, $K=\Q$, the conjecture follows from results on Serre's original conjectures. 
\par
Given this compatibility, it should not be a surprise that the recipe in \cite{BDJ} for $K=\Q$ is in some way compatible with the minimal weight $k(\rho)$ as given by Serre. However, a priori, the weights in the recipe in \cite{BDJ} cannot be directly compared to the weight of a modular form. We resolve this by associating an integer to each Serre weight  $\sigma$, and then take the minimum of all of these, denoted by $k_{\min}(W(\rho))$, for each $\sigma$ in the Serre weight set $W(\rho)$ as given in \cite{BDJ}. Using modular representation theory, we are then able to show that this minimum is equal to the minimal weight as suggested by Serre. 
\par
We will be able to show more than this, using the Breuil--M\'{e}zard conjecture. In \cite{BM}, Breuil and M\'{e}zard conjectured that the deformations of representations $\rho$ (in a local setting) of a fixed inertial type $\tau$ with Hodge--Tate
weights $(0, k-1)$ are parameterised by a quotient $R(k,\tau,\rho)$ of the universal deformation ring $R(\rho)$. They also gave a conjectural formula for the Hilbert-Samuel multiplicity of this quotient in terms of representation theoretic information associated to $(k, \tau, \rho)$. This conjecture has been rephrased in many settings and we use the version given and proved by Kisin in \cite{KisinFMC} in many cases.  We note here we do not need the actual value $\mu_{n,m}(\rho)$ appearing in the conjecture, but only need to know when it is non-zero. We will carefully describe Kisin's recipe helping us to determine when $\mu_{n,m}(\rho) \neq 0$.  This allows us to connect modularity of $\rho$ of a given weight to the existence of a crystalline lift with given weights, following an explicit recipe given by Kisin in the same paper. It is known that if $\rho$ is modular of weight $k$ and level $N$ prime to $p$, then $\rho|_{G_{\Q_p}}$ has a crystalline lift with Hodge--Tate weights $(0,k-1)$. This leads us to define a third notion of minimal weight, namely the minimal $k$ for which $\rho$ has a crystalline lift with Hodge--Tate weights $(0,k-1)$, which we will denote as $k_{\text{cris}}(\rho)$.
\par
We emphasise here that all three notions of minimal weight will be completely determined by the restriction $\rho|_{G_{\Q_p}}$, and the main result will be a purely local statement. The main theorem is as follows.
\begin{namedthm*}{Main Theorem}
Let $\rho: G_{\Q} \to \GL_2(\F_p)$ be an continuous, odd and irreducible Galois representation for $p$ an odd prime. Then
\[
k(\rho)=k_{\min}(W(\rho))=k_{\text{cris}}({\rho}).
\]
\end{namedthm*}

\par
The equality $k(\rho)=k_{\min}(W(\rho))$ should follow from the weight part of Serre's conjecture. The main contribution of this paper is to give a direct proof using modular representation theory. The next equality $k_{\text{cris}}(\rho)=k_{\min}(W(\rho))$ is a corollary of the Breuil--M\'{e}zard conjecture for which there is also a purely local proof.
\par
The set-up of this paper is as follows. First we introduce some background and notation, after which we will start introducing each of the minimal weight invariants involved in Section \ref{defsection}. We take care with the details of each recipe, and start with Serre's classical recipe before introducing the one from \cite{BDJ}. After introducing some modular representation theory, we show the compatibility of these two recipes in Section \ref{compsection}. In Section \ref{BMsection}, we will introduce a simplified version of the Breuil--M\'{e}zard conjecture and describe Kisin's recipe. In the final section we show this is compatible with the recipe from Buzzard, Diamond and Jarvis which establishes our main theorem. 

\subsubsection*{Acknowledgements}
I would like to thank Fred Diamond for his support and for suggesting the problem. I would also like to thank James Newton for his guidance and support. Finally I would like to thank Toby Gee, Pol van Hoften and Ashwin Iyengar for many helpful comments.
This work was supported by the Engineering and Physical Sciences Research Council~[EP/L015234/1], through the EPSRC Centre for Doctoral Training in Geometry and Number Theory (the London School of Geometry and Number Theory) at University College London.

\section{Background and notation}

\subsection{General notation} In the following we will study continuous Galois representations $\rho: G_{\Q} \to \GL_2(\overline{\F}_p)$. We will fix an algebraic closure $\Qpb$ and an embedding $\Qb \hookrightarrow \Qpb$, so that we can view $G_{\Qp}$ as a subgroup of $G_{\Q}$. We will identify $G_{\Qp}$ with $G_p$ the decomposition subgroup at $p$ and write $I_p$ for the inertia group at $p$. Moreover, we will write $I_{p,w}$ for the wild inertia subgroup and $I_{p,t}$ for the tame inertia subgroup.
\par
We will often be interested in studying restrictions of our representation, e.g. $\rho|_{G_p}$ or $\rho|_{I_p}$. We write $\omega_n$ for a fundamental character of level $n$, and write $\omega$ for the mod $p$ cyclotomic character, i.e. the unique fundamental character of level $1$.
\par
We will write $\chi_{\text{cyc}}: G_{\Qp} \to \Z_p^{\times}$ for the cyclotomic character. For $\lambda \in \overline{\F}_p^{\times}$, we denote by $\mu_{\lambda}: G_{\Qp} \to \overline{\F}_p^{\times}$ the unramified character sending geometric Frobenius to $\lambda$.
\par
We will briefly recall the $p$-adic Hodge theory notions we will need when discussing the Breuil--M\'{e}zard conjecture.

\subsection{$p$-adic Hodge theory notions}
We let $E/\Q_p$ be a sufficiently large extension with ring of integers $\mathcal{O}$, uniformiser $\varpi$ and residue field $\F$. We let $\Localrep: G_{\Qp} \to \GL_d(E)=\Aut_E(V)$ be a continuous representation with $V$ an $E$-vector space of dimension $d$.

\begin{definition}[Crystalline representations]
Let $V$ be as above. Then we say $V$ is crystalline if $D_{\text{cris}}(V)=(V \otimes_{\Q_p} B_{\text{cris}})^{G_{\Q_p}}$ is free of rank $d$ over $(E \otimes_{{\Q}_p} B_{\text{cris}})^{G_{\Qp}}=E$, where $B_{\text{cris}}$ is one of Fontaine's period rings, see e.g. \cite{bergerintroduction}.
\end{definition}

\begin{definition}[Hodge--Tate weights]
If $V$ is a crystalline representation of $G_{\Q_p}$ over $\Qpb$, then we define the Hodge--Tate weights of $V$ to be the multi-set of integers such that an arbitrary integer $i$ appears with multiplicity
\[
\dim_{\Qpb} (V \otimes_{\Q_p} \C_p(-i))^{G_{\Q_p}},
\]
with $\C_p$ denoting the completion of $\Qpb$ and $\C_p(-i))$ the $-i$-th Tate twist of $\C_p$.
\end{definition}

\begin{example}
With this convention the $p$-adic cyclotomic character $\chi_{\text{cyc}}$ has Hodge--Tate weight one.
\end{example}

\section{Definitions of the weight invariants}
\label{defsection}
In this section we first define the minimal weight invariant that does not need a recipe and is easy to define with the $p$-adic Hodge theory notions in mind. Afterwards we will explicitly define Serre's $k(\rho)$ and study the recipe from \cite{BDJ} to allow us to define $k_{\min}(W(\rho))$ in the next section. Throughout this section we let $p$ be an odd prime and we let ${\Localrep}: G_{\Q_p} \to \GL_2(\overline{\F}_p)$ be any continuous representation. We also let $\rho: G_{\Q} \to \GL_2(\overline{\F}_p)$ be a continuous, odd, irreducible representation and write $\rho_{G_p}$ for its restriction to $G_{\Q_p}$.

\subsection{The invariant $k_{\text{cris}}(\rho)$}

\begin{definition}[Crystalline lift]
We say $\Localrep$ has a crystalline lift of weight $k$ if it has a lift $\tilde{\Localrep}$ to characteristic zero which is crystalline with Hodge--Tate weights $(0,k-1)$.
\end{definition}

\begin{definition}[$k_{\text{cris}}(\rho)$]
\label{kcris}
For a representation $\Localrep$ as above we define  $k_{\text{cris}}({\Localrep})$ to be the least $k \geq 2$ such that ${\Localrep}$ has a crystalline lift of weight $k$.
\par
For a representation $\rho$ as above we set $k_{\text{cris}}(\rho):=k_{\text{cris}}(\rho_{G_p})$.
\end{definition}

\begin{example}
\label{exception}
Suppose ${\rho}_{G_{\Q_p}}$ is given by 
\[
\mtwo{\omega \mu_{\lambda}}{0}{0}{\mu_{\lambda'}},
\]
 then we obtain a lift 
\[
\mtwo{\chi_{\text{cyc}} \mu_{\tilde{\lambda}}}{0}{0} {\mu_{\tilde{\lambda'}}},
\]
with $^\sim$ denoting the Teichm{\"u}ller lift. This has Hodge--Tate weights $(0,1)$, so we have a crystalline lift of weight $2$ and thus $k_{\text{cris}}(\rho)=2$.
\end{example}

\subsection{Serre's minimal weight $k(\rho)$}
\label{oldserreweights}
In this section, we describe how Serre defines $k(\rho)$ in the seminal paper \cite{serreduke}. If $\rho_{G_p}$ is semisimple, $I_{p,w}$ acts trivially by \cite[Proposition 4]{serreprop}. As a consequence the action of $I_p$ factors through $I_{p,t}$. This action $\rho_{I_{p,t}}$ is then given by two characters $\phi,\phi': I_{p,t} \to \overline{\F}_p^{\times}$. From \cite[p. 567]{Edixhoven1992} we obtain that $\{\phi^p, \phi'^p\}=\{ \phi, \phi'\}$. Now, no longer assuming $\rho_{G_p}$ to be semisimple, but looking at the semisimplification of $\rho_{G_p}$ we have two cases:
\begin{enumerate}
\item $\phi, \phi'$ are both of level 1, and $\rho_{G_p}$ is reducible,
\item $\phi, \phi'$ are both of level 2, $\phi^p= \phi'$ and  $\phi'^p= \phi$, and $\rho_{G_p}$ is irreducible.
\end{enumerate}
\subsubsection{The level two case}
In the level two case, we can write 
\begin{align*}
\rho_{I_p}
& \sim \omega^{a} \otimes \begin{pmatrix}
\omega_2^{b-a}  & 0 \\
  0 &  \omega_2^{p(b-a)}
 \end{pmatrix}, 
\end{align*}
with $0 \leq a < b \leq p-2$ (\cite[Section 2.2]{serreduke}). In this case we define
\begin{equation*}
\label{eq:leveltwo}
k(\rho)=pa+b+1.
\end{equation*}
\subsubsection{The level one case}
In the level one case the representation $\rho_{G_p}$ is reducible, and we split into two cases depending on whether $\rho_{I_{p,w}}$ is trivial. 
\par
\vspace{1mm}
  \framebox{$\rho_{I_{p,w}}$ is trivial} \vspace{1mm} \\
In this case, we can write
\begin{align*}
\rho_{I_p}
 \sim \omega^a \otimes \begin{pmatrix}
  \omega^{b-a}  & 0 \\
  0 &  1
 \end{pmatrix},
\end{align*}
for $0 \leq a \leq b \leq p-2$, and in which case we define $k(\rho)=pa+b+1$, however,  if $(a,b)=(0,0)$ this definition would give us $k(\rho)=1$. Since we are not including modular forms of weight $1$, we simply add $p-1$, so $k(\rho)=p$. So we find 
\[
k(\rho)=\begin{cases} pa+b+1 & \text{if } (a,b) \neq (0,0), \\
p & \text{if } (a,b)=(0,0). \end{cases}
\]
\par
\vspace{1mm}
  \framebox{$\rho_{I_{p,w}}$ is non-trivial} \vspace{1mm} \\
Here we can write
\begin{align*}
\rho_{I_p}
 \sim \omega^{\alpha} \otimes \begin{pmatrix}
  \omega^{\beta-\alpha}  & \ast \\
  0 &  1
 \end{pmatrix}. 
\end{align*}
We fix representatives $\alpha, \beta$ such that $0 \leq \alpha \leq p-2$ and $1 \leq \beta \leq p-1$. For $\beta \neq \alpha+1$, we define $a=\min(\alpha,\beta)$ and $b=\max(\alpha,\beta)$, and then we set
\begin{equation*}
\label{eq:levelone}
k(\rho)=pa+b+1.
\end{equation*}
However, if $\beta=\alpha+1$, we split according to whether the representation is \emph{peu ramifi\'{e}e} or \emph{tr\`{e}s ramifi\'{e}e} at $p$. For definitions of these notions we refer to \cite[p. 186]{serreduke}. 
Then Serre defines
\[
k(\rho)=\begin{cases} 2+\alpha(p+1) & \text{ if $\rho_{G_p}$ is peu ramifi\'{e}e and $\beta=\alpha+1$}, \\
p+1+\alpha(p+1) & \text{ if  $\rho_{G_p}$ is tr\`{e}s ramifi\'{e}e and $\beta=\alpha+1$}. \end{cases}
\]
This definition is slightly different for $p=2$, but we focus only on odd primes.

\subsection{Serre weights and the set $W(\rho)$}
Before we introduce the new notion of weights, we briefly reformulate Serre's weight recipe. We write $\Sym^n \overline{\F}_p^2$ for the $n$-th symmetric power of the standard representation of $\GL_2(\F_p)$ on $\overline{\F}_p^2$. Now for any 
continuous, odd, irreducible representation $\rho: G_{\Q} \to \GL_2(\overline{\F}_p)$ we say $\rho$ is modular of weight $k \geq 2$ if and only if  $\rho \sim \overline{\rho}_f$, $f$ of weight $k$ and level $N$ prime to $p$. Now, as is explained in \cite{herziggee}, this is equivalent to the Hecke eigensystem of $\rho$ appearing in $H^1(\Gamma_1(N),\Sym^{k-2}\overline{\F}_p^2)$, which is in turn equivalent to the Hecke eigensystem appearing in $H^1(\Gamma_1(N),V)$ for $V$ a Jordan--H{\"o}lder factor of $\Sym^{k-2}\overline{\F}_p^2$.
\par
If $k > p+1$, then $\Sym^{k-2}\overline{\F}_p^2$ is reducible, so we get proper Jordan--H{\"o}lder factors. We obtain a new notion of weight from \cite{BDJ} as follows.
\begin{definition}[Serre weight]
We define a Serre weight to be an irreducible $\overline{\F}_p$-representation of $\GL_2(\F_p)$. 
\end{definition}
The irreducible representations of $\GL_2(\F_p)$ are given by 
\[
V_{a,b}=\det ^a \otimes \Sym^{b-1}\overline{\F}_p^2,
\]
where $0 \leq a \leq p-2$ and $1 \leq b \leq p$.  We do not need to recall the precise definition of what it means for $\rho$ to be modular of weight $W$, where $W$ is any finite-dimensional $\overline{\F}_p[(\GL_2(\F_p)]$-module, here, since it is not relevant for this paper but refer to  \cite[Definition 2.1]{BDJ}. The authors proceed in showing that this notion of modularity of weight $W$ is equivalent to $\rho$ being modular of weight $V$ where $V$ is a Jordan--H{\"o}lder factor of $W$ \cite[Lemma 2.3]{BDJ}, and these are exactly the Serre weights. For each $\rho$ as above (and more general $\rho$) they define a set $W(\rho)$ consisting of Serre weights and then conjecture that if $\rho$ is modular, then
\[
W(\rho)=\{ V \, \vert \, \rho \text{ is modular of weight } V \}.
\]
Next we will describe the set $W(\rho)$ for all possible $\rho$, up to twists by the cyclotomic character. This is motivated by the compatiblity of modularity in the sense of Serre weights among determinants and twists, proven in \cite[Corollary 2.11(2)]{BDJ}. In particular, $\rho$ is modular of weight $b+1$  and level prime to $p$ in the classical sense if and only if $\omega^a \rho$ is modular of weight $V_{a,b}$ \cite[p. 30]{BDJ}.
\begin{definition}
\label{newweights}
Let  ${\rho: G_{\Qp} \to \GL_2(\overline{\F}_p)}$ be a continuous, odd, irreducible representation, suppose $\rho_{G_p}$ is irreducible, then up to a twist
\[
\rho_{I_p} \sim  \begin{pmatrix}
  \omega_2^{b}   & 0 \\
  0 &   \omega_2^{pb} \end{pmatrix},
\]
for $1 \leq b \leq p-1$. Then the set $W(\rho)$ is given by
\[
W(\rho)=\{ V_{0,b}, V_{b-1,p+1-b}\}.
\]
Otherwise, for $\rho_{G_p}$ reducible, up to a twist we can write
\[
\rho_{I_p} \sim  \begin{pmatrix}
  \omega^{b}   & \ast \\
  0 &  1 \end{pmatrix},
\]
for $1 \leq b \leq p-1$, and we write 
\[
\rho_{G_p} \sim  \begin{pmatrix}
\chi_1   & \ast \\
  0 &  \chi_2 \end{pmatrix}.
\]
In this case, as in the proof of \cite[Theorem 3.17]{BDJ},
\begin{align}
\label{BDJrecipe}
W(\rho)=\begin{cases} 
\{V_{0,b}\}, & \text{if } 1<b < p-1 \text{ and $\rho_{G_p}$ is non-split}, \\
\{V_{0,b},V_{b,p-1-b} \}, &  \text{if } 1<b < p-2 \text{ and $\rho_{G_p}$ is split}, \\
\{V_{0,p-2},V_{p-2,p},V_{p-2,1} \}, &  \text{if } b=p-2, p>3  \text{ and $\rho_{G_p}$ is split}, \\
\{V_{0,p-1}\}, &  \text{if } b= p-1 \text{ and $p>2$}, \\
\{V_{0,p} \}, &  \text{if } b=1, \chi_1 \chi_2^{-1}=\omega \text{ and $\rho_{G_p}$ is tr\`{e}s ramifi\'{e}e}, \\
\{V_{0,p},V_{0,1}, V_{1,p-2} \}, &  \text{if } b=1, p > 3, \text{ and $\rho_{G_p}$ is split}, \\
\{V_{0,3},V_{0,1},V_{1,3},V_{1,1} \}, &  \text{if } b=1, p=3 \text{ and $\rho_{G_p}$ is split}, \\
\{ V_{0,p},V_{0,1} \}, & \text{otherwise.}
\end{cases}
\end{align}
\end{definition}
We emphasise that in fact the above set is depends only on $\rho_{I_p}$, as is proved in \cite[Proposition 3.13]{BDJ}.

\section{Compatibility of two modularity recipes}
\label{compsection}
In this section we will first introduce notions concerning the Grothendieck groups of our representations, which can be found in more in detail in \cite{reduzzi}. Then for each Serre weight $V_{a,b}$ we will define $k_{\min}(V_{a,b})$ as the minimal $k$ such that $V_{a,b}$ appears as Jordan--H{\"o}lder constituent of $\Sym^{k-2} \overline{\F}_p^2$. Using modular representation theory we will show that we can determine this value explicitly. Then we will finally define the third weight invariant of interest, $k_{\min}(W(\rho))$, which will be the minimum of all the $k_{\min}(V_{a,b})$ for $V_{a,b} \in W(\rho)$. 

\subsection{Grothendieck group relations}
We write $G_0(\overline{\F}_p[\GL_2(\F_p)])$ for the Grothen\-dieck group on finite-dimensional representations of $\GL_2(\F_p)$ over $\overline{\F}_p$. The group is isomorphic to the free abelian group generated by the classes of irreducible representations, i.e. $[\det^a \otimes \Sym^n]$ with $0 \leq a \leq p-2$, $0 \leq n \leq p-1$. As in \cite{reduzzi}, write $R \leq R'$ whenever $R'-R$ is in the submonoid of $G_0(\overline{\F}_p[\GL_2(\F_p)])$ consisting of classes of $\overline{\F}_p$-representations of $\GL_2(\F_p)$. If $\sigma, \sigma'$ are $\overline{\F}_p$-representations of $\GL_2(\F_p)$ and if $\sigma$ is irreducible, then $[\sigma] \leq [\sigma']$ if and only if $\sigma$ is a Jordan--H{\"o}lder factor of $\sigma'$. Hence the Grothendieck group enables us to determine the Jordan--H{\"o}lder factors of the representations we are interested in.
\par
For $k < -1$, we define
\[
[\Sym^k]:=-[\det^{k+1} \otimes \Sym^{-k-2}],
\]
and $[\Sym^{-1}]=0$. To ease notation, in the following we will write $S_n$ for $\Sym^n \overline{\F}_p^2$. In all following results we will make extensive use of Serre's periodic relation
\[
[S_{n+p-1}-S_n]=[\det \otimes (S_{n-2} - S_{n-p-1})]
\]
which holds for all $n \in \Z$, obtained from \cite{lettreSerre}.
\begin{lemma}
\label{recursiveformula}
Let $1 \leq n \leq p-1$ and $k \geq 1$, then
\[
[S_{n+k(p-1)}]=[S_n]+[\det^n \otimes S_{p-n-1}]+[\det \otimes S_{n+(k-1)(p-1)-2}].
\]
\end{lemma}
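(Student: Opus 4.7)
The plan is to proceed by induction on $k$, using Serre's periodic relation stated just before the lemma. Throughout I will use that the determinant character has order $p-1$ as a character of $\GL_2(\F_p)$, so exponents on $\det$ may be reduced modulo $p-1$.

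For the base case $k=1$, I would apply Serre's relation
\[
[S_{n+p-1}] = [S_n] + [\det \otimes S_{n-2}] - [\det \otimes S_{n-p-1}]
\]
and then rewrite $[\det \otimes S_{n-p-1}]$ using the convention for negative indices. Since $1 \leq n \leq p-1$, we have $-p \leq n-p-1 \leq -2$, so $[S_{n-p-1}] = -[\det^{n-p} \otimes S_{p-n-1}]$. Tensoring with $\det$ and reducing the exponent $n-p+1$ mod $p-1$ to $n$ gives
\[
[\det \otimes S_{n-p-1}] = -[\det^{n} \otimes S_{p-n-1}].
\]
Substituting back produces exactly the $k=1$ case of the claimed identity (where the last summand is $[\det \otimes S_{n-2}]$).

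For the inductive step, assuming the formula for some $k \geq 1$, I apply Serre's relation with $m = n + k(p-1)$:
\[
[S_{n+(k+1)(p-1)}] = [S_{n+k(p-1)}] + [\det \otimes S_{n+k(p-1)-2}] - [\det \otimes S_{n+(k-1)(p-1)-2}].
\]
The inductive hypothesis rewrites $[S_{n+k(p-1)}]$ so that its $[\det \otimes S_{n+(k-1)(p-1)-2}]$ term cancels the subtracted term, leaving
\[
[S_n] + [\det^n \otimes S_{p-n-1}] + [\det \otimes S_{n+k(p-1)-2}],
\]
which is the desired formula at step $k+1$.

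The only mild technical point is the base case bookkeeping: one must verify that the convention $[S_k] = -[\det^{k+1} \otimes S_{-k-2}]$ for $k < -1$ applies (it does, since $n-p-1 \leq -2$ in our range) and correctly reduce $\det^{n-p+1}$ to $\det^n$. After that, the induction is essentially mechanical cancellation, so I do not anticipate any real obstacle.
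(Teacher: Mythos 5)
Your proposal is correct and follows essentially the same route as the paper: induction on $k$, with the base case obtained from Serre's periodic relation together with the negative-index convention $[S_{n-p-1}]=-[\det^{n-p}\otimes S_{p-n-1}]$, and the inductive step reduced to the cancellation of the two terms with equal index $n+(k-1)(p-1)-2 = n+k(p-1)-p-1$. No gaps.
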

\begin{proof}
We proceed using Serre's periodic relation above and induction on $k$. The base case is $k=1$:
\begin{align*}
    [S_{n+p-1}]&=[S_n]+[\det \otimes S_{n-2}]-[\det \otimes S_{n-p-1}] \\
    &=[S_n]+[\det \otimes S_{n-2}]+[\det \otimes (\det^{n-p} \otimes S_{p-n-1})] \\
    &=[S_n]+[\det \otimes S_{n-2}]+[\det^{n} \otimes S_{p-n-1}].
\end{align*}
Now we suppose that it holds for some $i \geq 1$. Then we find
\begin{align*}
    [S_{n+(i+1)(p-1)}]&=[S_{n+i(p-1)}]+[\det \otimes S_{n+i(p-1)-2}]-[\det \otimes S_{n+i(p-1)-p-1}] \\
    & \overset{\text{(IH)}}{=} [S_n]+[\det^n \otimes S_{p-n-1}]+[\det \otimes S_{n+(i-1)(p-1)-2}]\\
    &\quad +[\det \otimes S_{n+i(p-1)-2}]-[\det \otimes S_{n+i(p-1)-p-1}] \\
    &\,=[S_n]+[\det^n \otimes S_{p-n-1}]+[\det \otimes S_{n+i(p-1)-2}]
\end{align*}
where the last step simply is because $[\det \otimes S_{n+(i-1)(p-1)-2}]=[\det \otimes S_{n+i(p-1)-p-1}]$ since the indices are the same.
\end{proof}
When it is clear from the context, in the following we will often omit the brackets of a class in the Grothendieck group.
\subsection{ The invariants $k_{\min}(V_{a,b})$ and $k_{\min}(W(\rho))$}
Next for each $V_{a,b} \in W(\rho)$, we define
\[
k_{\min}(V_{a,b})=\min_{k} \{ k \geq 2 \, \vert \, V_{a,b} \in \JH(\Sym^{k-2} \overline{\F}_p^2) \},
\]
and using the results above we prove an explicit formula for this.

\begin{proposition}
\label{proofkmin}
Let $0 \leq a \leq p-2$ and $1 \leq b \leq p$, then
\[
k_{\min}(V_{a,b})= \begin{cases} a(p+1)+b+1, & a+b < p, \\
(a+1)(p+1)+bp-p^2, & a+b \geq p. \end{cases}
\]
\end{proposition}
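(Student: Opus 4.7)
The plan is to prove the formula by induction on $a$, using Lemma~\ref{recursiveformula} as a one-step reduction from $V_{a-1,b}$ to $V_{a,b}$.

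First I would repackage the lemma as follows: for every integer $M \geq p$, writing $M = n + k(p-1)$ uniquely with $1 \leq n \leq p-1$ and $k \geq 1$, one has
\[
[S_M] = [V_{0,n+1}] + [V_{n,p-n}] + [\det \otimes S_{M-p-1}],
\]
with the convention $S_{-1} = 0$ used at $M = p$. Since $V_{0,n+1}$ only contributes Jordan--H\"older factors with first index $0$, $V_{n,p-n}$ only contributes factors with $a+b = p$, and tensoring with $\det$ shifts the first index up by one, this yields the following equivalence for $a \geq 1$: $V_{a,b} \leq [S_M]$ if and only if $M \geq p$ and either $a+b = p$ with $a = n$, or $V_{a-1,b} \leq [S_{M-p-1}]$.

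The base case $a = 0$ is immediate: $V_{0,b} = S_{b-1}$ is irreducible for $1 \leq b \leq p$, so $V_{0,b}$ first appears in $[S_{b-1}]$ and $k_{\min}(V_{0,b}) = b+1$, which agrees with both branches of the formula (for $b = p$, the second branch also evaluates to $p+1$). For the inductive step, the equivalence above gives
\[
k_{\min}(V_{a,b}) = \min\!\bigl(k_{\min}(V_{a-1,b}) + (p+1),\ a+p+1 \text{ if } a+b = p\bigr),
\]
where the direct value $a+p+1$ arises from the choice $k = 1$ in $M = a + k(p-1)$. When $a+b < p$, only the recursive option applies, and the inductive hypothesis (first branch) gives $k_{\min}(V_{a,b}) = (a-1)(p+1) + b + 1 + (p+1) = a(p+1) + b + 1$. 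When $a+b > p$, again only the recursive option applies; $V_{a-1,b}$ already lives in the $\geq p$ regime, so the inductive hypothesis (second branch) gives $k_{\min}(V_{a-1,b}) = a(p+1) + bp - p^2$, and adding $p+1$ reproduces $(a+1)(p+1) + bp - p^2$. When $a+b = p$, $V_{a-1,p-a}$ sits in the $< p$ regime, so the inductive hypothesis gives $k_{\min}(V_{a-1,p-a}) = ap$ and the recursive option evaluates to $(a+1)p + 1$; since $a+p+1 < (a+1)p + 1$ for all $a \geq 1$, the direct option wins, giving $k_{\min}(V_{a,p-a}) = a+p+1$, which matches the second branch.

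The principal technical point I anticipate is the direct-versus-recursive comparison when $a+b = p$: one must confirm that the $V_{n,p-n}$ shortcut in the lemma truly beats the path going through $\det$ applied to a smaller symmetric power. Apart from this, the argument is bookkeeping with the recursion, with care needed only at the boundary $M = p$, where $[\det \otimes S_{-1}]$ vanishes and makes the base of the induction behave correctly.
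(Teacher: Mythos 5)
Your proposal is correct and takes essentially the same approach as the paper: the same key tool (Lemma \ref{recursiveformula}) and the same reduction, namely that for $a\geq 1$ the weight $V_{a,b}$ occurs in $S_M$ only via the constituent $\det^{a}\otimes S_{p-a-1}$ (the $a+b=p$ shortcut) or via $V_{a-1,b}$ occurring in $S_{M-p-1}$. The only difference is organizational: you package this as an exact recursion for $k_{\min}(V_{a,b})$ and get existence and minimality in a single induction on $a$, whereas the paper proves the upper bound by induction on $a$ and minimality by a separate induction on $t$ in $r=s+t(p+1)$, with the same special comparison in the $a+b=p$ case.
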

\begin{proof}
Define $k_{\min}'(V_{a,b})$ to be
\[
k_{\min}'(V_{a,b})= \begin{cases} a(p+1)+b+1, & a+b < p, \\
(a+1)(p+1)+bp-p^2, & a+b \geq p. \end{cases}
\]
So we need to show that $k_{\min}(V_{a,b})=k_{\min}'(V_{a,b})$. To avoid complications later, we first prove the proposition for $a=0$.  
\par
For $a=0$ we find $k_{\min}'(V_{a,b})=b+1$ for all $b$ in which case $V_{a,b}$ equals $S_{b-1}$, which is irreducible. Since $S_{b+1-2}=S_{b-1}$, the minimal $k$ such that $V_{a,b}$ appears in some $S_{k-2}$ is $b+1$, so we find $k_{\min}(V_{a,b})=b+1$ and the result follows. Henceforth we assume $a \neq 0$.
\par
Now first we prove $V_{a,b} \in \JH(S_{k_{\min}'(V_{a,b})-2})$, starting with $a,b$ such that $a+b < p$. We do this by induction on $a$, the base case is $a=0$, which is done above. Now suppose we have proved for some $a \leq p-2$ that if $1 \le b < p-a$, that then $V_{a,b} \in \JH(S_{k_{\min}'(V_{a,b})-2})$. Now we use Lemma \ref{recursiveformula} to show that if $a+1 \le p-2$ and $1 \le b < p-(a+1)$, that then $V_{a+1,b} \in \JH(S_{k_{\min}'(V_{a+1,b})-2})$:
\begin{align*}
S_{k_{\min}'(V_{a+1,b})-2} &=S_{(a+1)(p+1)+b-1} \\
& \geq \det \otimes S_{a(p+1)+b-1} \\
& \overset{\text{(IH)}}{\geq} \det \otimes (\det^a \otimes S_{b-1}) \\
&= \det^{a+1} \otimes S_{b-1}.
\end{align*}
Now for $a,b$ such that $a+b \geq p$, we again perform induction on $a$. This time our base case is $a=p-b$ in which case $k_{\min}'(V_{a,b})=a+p+1$. Now we use Lemma \ref{recursiveformula} again, which gives us
\begin{equation}
\label{eq:casep}
S_{a+p-1}=S_a+\det \otimes S_{a-2}+\det^a \otimes S_{p-a-1}.
\end{equation}
Now since $b=p-a$, we find that $V_{a,b}$ corresponds to the last term. Now again we do induction on $a$, suppose we have proved for some $a \le p-2$ that if $p-a \leq b \leq p$, that then $V_{a,b} \in \JH(S_{k_{\min}'(V_{a,b})-2})$. Again we use Lemma \ref{recursiveformula}, this time to show that if $a+1 \le p-2$ and $p-(a+1) \leq b \leq p$, that then $V_{a+1,b} \in \JH(S_{k_{\min}'(V_{a+1,b})-2})$:
\begin{align*}
S_{k_{\min}'(V_{a+1,b})-2} &=S_{(a+2)(p+1)+bp-p^2-2} \\
& \geq \det \otimes S_{(a+1)(p+1)+bp-p^2-2} \\
& \overset{\text{(IH)}}{\geq} \det \otimes (\det^a \otimes S_{b-1}) \\
&= \det^{a+1} \otimes S_{b-1}.
\end{align*}
This finishes the first part, we proceed with showing that the values given above for $k_{\min}'(V_{a,b})$ are actually minimal.   
\par
We will show that if $r < k_{\min}'(V_{a,b})$, then $V_{a,b} \not \in \JH(S_{r-2})$. Now we write ${r=s+t(p+1)}$ for some $2 \leq s \leq p+2$ and $t \geq 0$. We proceed by using induction on $t$. First say $t=0$, then for $r=s$ and for $2 \leq r \leq p+1$, $S_{r-2}$ is irreducible, in which case $V_{a,b} \in \JH(S_{r-2})$ if and only if $V_{a,b}=S_{r-2}$. But the latter implies $a=0$, which we have already dealt with above. Now for the remaining case $r=p+2$, we find
\[
S_{r-2}=S_p=S_1 + \det \otimes S_{p-2},
\]
hence we have two Jordan--H{\"o}lder constituents. Now since $S_1=V_{0,2}$, again we have $a=0$. Now for the second constituent we find $k_{\min}'(V_{1,p-1})=p+2$. But then $r=k_{\min}'(V_{a,b})$, which gives us a contradiction. 
\par
Now suppose it holds for some $r=s+t(p+1)$, then to complete our induction we need to prove that if $r=s+(t+1)(p+1) < k_{\min}'(V_{a,b})$, then $V_{a,b} \not \in \JH(S_{r-2})$. Again we use Lemma \ref{recursiveformula}, first we rewrite $r=A(p-1)+B+p+1$ for some $A \geq 0$ and $1 \leq B \leq p-1$. Then we obtain
\begin{align*}
S_{r-2}&=S_{A(p-1)+B+p+1-2}=S_{(A+1)(p-1)+B} \\
&=S_B+ \det^B \otimes S_{p-B-1} +\det \otimes S_{A(p-1)+B-2}.
\end{align*}
The first two terms in the sum are irreducible, so are Jordan--H{\"o}lder constituents, hence we have to show these do not correspond to $V_{a,b}$. Recall that we have assumed $a \neq 0$, hence $V_{a,b} \neq S_B$ for any $B$. Next we consider the term $\det^B \otimes S_{p-B-1}$. Now since $1 \leq B \leq p-1$ we must have $B=a$ (as $a \neq 0$) and hence $b=p-B=p-a$ in order for it to correspond to $V_{a,b}$. But for $V_{a,p-a}$ we have $k_{\min}'(V_{a,b})=a+p+1$. However, since $t+1 \geq 1$, this implies $r \geq a+p+1$, which gives us a contradiction with $r < k_{\min}'(V_{a,b})$. 
\par
It remains to show $V_{a,b} \not \in \JH(\det \otimes S_{A(p-1)+B-2})$, which is equivalent to showing $V_{a-1,b} \not \in \JH(S_{A(p-1)+B-2})$. Now by our induction hypothesis we find that if ${r-(p+1)<k_{\min}'(V_{a-1,b})}$ then $V_{a-1,b} \not \in \JH(S_{r-(p+1)-2})$. Now for all $a+b \neq p$, we find $k_{\min}'(V_{a-1,b})=k_{\min}'(V_{a,b})-(p+1)$, so the result follows for such $a,b$ where $a+b \neq p$. 
\par
Now we finish by showing $V_{a,b} \not \in \JH(\det \otimes S_{A(p-1)+B-2})$ in the remaining case where $a+b=p$. We have seen that $k_{\min}'(V_{a,p-a})=a+p+1$. We further know that for $a \neq 0$, we have $k_{\min}(V_{a,b}) \geq p+2$. Now if  for some $r < k_{\min}'(V_{a,b})$, we have $V_{a,b} \in \JH(S_{r-2})$, then we must have $p+2 \leq r \leq a+p$. Recalling $r=A(p-1)+B+p+1$   for some $A \geq 0$ and $1 \leq B \leq p-1$, this gives us $A=0$ and $1 \leq B \leq a-1$, so that $\det \otimes S_{A(p-1)+B-2}= \det \otimes S_{B-2}$, which is irreducible and does not correspond to $V_{a,p-a}$. So it follows in this case too and we can conclude $k_{\min}(V_{a,b})=k_{\min}'(V_{a,b})$.
\end{proof}

Now we finally introduce the third minimal weight invariant:
\begin{definition}
\label{kmin}
We set
\[
k_{\min}(W(\rho))=\min_{k} \{ k_{\min}(V_{a,b}) \, \vert \, V_{a,b} \in W(\rho) \}.
\]
Alternatively, we can write
\[
k_{\min}(W(\rho))=\min_k \{ k \geq 2 \, \vert \, W(\rho) \cap \JH(\Sym^{k-2} \overline{\F}_p^2) \neq \emptyset \}.
\] 
\end{definition}
Now we are ready to prove the compatiblity between the two weight recipes.

\begin{theorem}
\label{finaltheorem}
Let $p$ be an odd prime and let 
\[
\rho: G_{\Q} \to \GL_2(\overline{\F}_{p}),
\]
 be a continuous, odd, irreducible representation.  Let $k(\rho)$ be defined as in Section \ref{oldserreweights} and $k_{\min}(W(\rho))$ as above, then
\[
k(\rho)=k_{\min}(W(\rho)).
\]
\end{theorem}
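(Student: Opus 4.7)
The plan is a case-by-case verification, matching Serre's classification in Section \ref{oldserreweights} with the sub-cases of the recipe in Definition \ref{newweights}, and in each case using Proposition \ref{proofkmin} to compute the minimum of $k_{\min}(V)$ over $V \in W(\rho)$. First I would reduce to a statement purely about $\rho|_{I_p}$ (together with the peu/tr\`{e}s ramifi\'{e}e distinction in one case): Serre's recipe only uses $\rho|_{I_p}$ and this extra datum, and \cite[Proposition 3.13]{BDJ} gives the analogous statement for $W(\rho)$. Using the twist compatibility from \cite[Corollary 2.11(2)]{BDJ}, I may further assume $\rho|_{I_p}$ is in one of the normalized forms appearing in Definition \ref{newweights}, keeping track of the twisting character on the BDJ side.

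In the generic cases, where $k(\rho) = pa+b+1$, the goal is to identify a distinguished weight in $W(\rho)$ of the form $V_{a, b-a}$ whose $k_{\min}$ equals $pa+b+1$, and to verify that all other weights in $W(\rho)$ have strictly larger $k_{\min}$. For the level two case, after twisting the BDJ-normalized weight set by $\omega^a$, one obtains $W(\rho) = \{V_{a, b-a}, V_{b-1, p+1-b+a}\}$. Since $a + (b-a) = b \le p-2 < p$, Proposition \ref{proofkmin} gives $k_{\min}(V_{a, b-a}) = a(p+1) + (b-a) + 1 = pa+b+1$, while $(b-1) + (p+1-b+a) = p + a \ge p$ yields $k_{\min}(V_{b-1, p+1-b+a}) = (a+1)p + b$, which exceeds $pa+b+1$ by $p-1 > 0$. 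The level one split and non-split generic cases with $\beta > \alpha$ are handled analogously.

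For the exceptional cases I would argue directly: when $(a,b) = (0,0)$ so $\rho|_{I_p}$ is trivial, we normalize to $b = p-1$ in BDJ's notation and obtain $W(\rho) = \{V_{0, p-1}\}$ with $k_{\min}(V_{0, p-1}) = p = k(\rho)$; when $\beta = \alpha+1$ and $\rho_{G_p}$ is peu ramifi\'{e}e we have $W(\rho) = \{V_{\alpha, 1}, V_{\alpha, p}\}$ with $k_{\min}(V_{\alpha,1}) = \alpha(p+1) + 2 = k(\rho)$ and $k_{\min}(V_{\alpha, p}) = (\alpha+1)(p+1)$ strictly larger; and when $\rho_{G_p}$ is tr\`{e}s ramifi\'{e}e we have $W(\rho) = \{V_{\alpha, p}\}$ with $k_{\min}(V_{\alpha, p}) = (\alpha+1)(p+1) = p+1+\alpha(p+1) = k(\rho)$.

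The main obstacle will be the bookkeeping required to exhaust all eight sub-cases of Definition \ref{newweights}, particularly when Serre's parameters satisfy $\beta < \alpha$. In that situation the BDJ-normalized exponent becomes $B = \beta - \alpha + p - 1 \in [2, p-2]$, and the distinguished weight $V_{\alpha, B}$ now falls in the $a+b \ge p$ regime of Proposition \ref{proofkmin}, yielding $k_{\min}(V_{\alpha, B}) = (\alpha+1)(p+1) + Bp - p^2 = \beta p + \alpha + 1 = k(\rho)$. One must also verify in the $b = p-2$ split and $b = 1$ split cases—where $W(\rho)$ has three or four elements—that the auxiliary weights never give a smaller $k_{\min}$ than the distinguished one, which is a direct check using the two formulas in Proposition \ref{proofkmin}.
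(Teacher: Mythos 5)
Your proposal is correct and takes essentially the same route as the paper: reduce to $\rho|_{I_p}$ via twist compatibility, run through Serre's cases against the BDJ recipe, and evaluate each weight with Proposition \ref{proofkmin} (including the $\beta<\alpha$ shift to $B=\beta-\alpha+p-1$ landing in the $a+b\geq p$ branch, and the peu/tr\`es ramifi\'ee and trivial-inertia exceptions). One minor caveat: the auxiliary weights need not have \emph{strictly} larger $k_{\min}$ --- in the split level-one cases with nonzero twist both weights attain the minimum, as the paper's own remark notes --- but since only $\geq$ is needed this does not affect the argument.
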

\begin{proof}
We go through it case by case. We start with the irreducible case where
\[
\rho_{I_p} \sim  \omega^a \otimes \begin{pmatrix}
  \omega_2^{(b-a)}   & 0 \\
  0 &   \omega_2^{p(b-a)} \end{pmatrix},
\]
for some $0 \leq a < b \leq p-1$. In this case $k(\rho)=pa+b+1$ and we have ${W(\rho)=\{V_{a,b-a},V_{b-1,p+1-(b-a)} \}}$. Now by Proposition \ref{proofkmin} we find
\[
k_{\min}(V_{a,b-a})=a(p+1)+b-a+1=pa+b+1, \quad k_{\min}(V_{b-1,p+1-(b-a)})=pa+b+p,
\]
hence
\[
k_{\min}(W(\rho))=pa+b+1=k(\rho).
\]
Next we continue with the reducible case, first assume $\rho_{I_{p,w}}$ is trivial, then
\[
\rho_{I_p} \sim   \omega^a \otimes \begin{pmatrix}
  \omega^{b-a}   & 0 \\
  0 &   1 \end{pmatrix},
\]
for $0 \leq a  \leq b \leq p-2$. Here we have $k(\rho)=pa+b+1$ unless $(a,b)=(0,0)$ in which case $k(\rho)=p$. We have 
\[
W(\rho)=\begin{cases} \{V_{a,b-a},V_{b,p-1-(b-a)}\}, & 1<b-a<p-2, \\
					\{V_{0,p-2},V_{p-2,p},V_{p-2,1} \}, & b-a=p-2, p > 3, \\
					\{ V_{a,p-1} \}, & b-a=0, \\
					\{ V_{a,p}, V_{a,1},V_{a+1,p-2} \}, & b-a=1, p>3, \\
					\{ V_{0,3},V_{0,1},V_{1,3},V_{1,1} \}, & b-a=1, p=3,
\end{cases}
\]
and use Proposition \ref{proofkmin} to find
\[
k_{\min}(W(\rho))=\begin{cases} pa+b+1, & 1<b-a<p-2, \\
								p-1, & b-a=p-2, p > 3, \\
					p, & b-a=0, (a,b)=(0,0), \\
pa+b+1, & b-a=0, (a,b) \neq (0,0), \\
					pa+b+1, & b-a=1, p>3, \\
					2, & b-a=1, p=3.
\end{cases}
\]
In the first, third, fourth and fifth case it is clear that this coincides with $k(\rho)$. For the second case, note $0 \leq a \leq b \leq p-2$ by assumption, hence $a=0, b=p-2$, in which case hence $k(\rho)=p-2+1=p-1$. Finally, if $p=3$, $b-a=1$, then $a=0,b=1$, so $k(\rho)=2$. \\
\par
Next assume $\rho_{I_{p,w}}$ is non-trivial, then
\[
\rho_{{I_p}}\sim \omega^{\alpha} \otimes \begin{pmatrix}
  \omega^{\beta-\alpha}   & \ast \\
  0 &  1
 \end{pmatrix},
\]
where $0 \leq \alpha \leq p-2$ and $1 \leq \beta \leq p-1$. First assume $\beta \neq \alpha+1$, then
\[
k(\rho)=1+p \min(\alpha, \beta)+ \max(\alpha,\beta), 
\]
however, we first assume $\beta \geq \alpha$. We find
\[
W(\rho)=\begin{cases} \{V_{\alpha,\beta-\alpha} \}, & 1<\beta-\alpha<p-1, \\
					\{ V_{\alpha,p-1} \}, & \beta-\alpha=0, \\
					\{ V_{0,p-1} \}, & \beta-\alpha=p-1, \\
\end{cases}
\]
and use Proposition \ref{proofkmin} to find
\[
k_{\min}(W(\rho))=\begin{cases} p\alpha+\beta+1,  & 1<\beta-\alpha<p-1, \\
												p\alpha+\beta+1, & \beta-\alpha=0, \\
					p, & \beta-\alpha=p-1. \\

\end{cases}
\]
Note in the final case $\beta=p-1$, $\alpha=0$, so $k(\rho)=1+p-1=p$, hence in all cases $k_{\min}(W(\rho))=k(\rho)$. 
\par
Now suppose $\beta-\alpha < 0$, then $1 < \beta-\alpha+p-1 \leq p-2$, and so we have $W(\rho)=\{ V_{\alpha, \beta-\alpha+p-1}\}$, now since $\alpha+\beta-\alpha+p-1=\beta+p-1 \geq p$, we find by Proposition \ref{proofkmin}
\[
k_{\min}(V_{\alpha, \beta-\alpha+p-1})=1+p \beta+\alpha
\]
which coincides with $k(\rho)$ in this case. 
Now suppose $\beta=\alpha+1$, then recall that
\[
k(\rho)=\begin{cases} 
2+\alpha(p+1), & \text{ if $\rho_{G_p}$ is peu ramifi\'{e}e}, \\
p+1+\alpha(p+1), & \text{ if $\rho_{G_p}$ is tr\`{e}s ramifi\'{e}e}, \end{cases}
\]
and 
\[
W(\rho)=\begin{cases} \{V_{\alpha,1}, V_{\alpha,p} \}, & \text{if $\rho_{G_p}$ is peu ramifi\'{e}e}, \\
					\{ V_{\alpha,p} \}, & \text{if $\rho_{G_p}$ is tr\`{e}s ramifi\'{e}e}, \\
\end{cases}
\]
and
\[
k_{\min}(W(\rho))=\begin{cases} 2+\alpha(p+1), & \text{if $\rho_{G_p}$ is peu ramifi\'{e}e}, \\
					p+1+\alpha(p+1), & \text{if $\rho_{G_p}$ is tr\`{e}s ramifi\'{e}e}. \\
\end{cases}
\]
by Proposition \ref{proofkmin}, hence the two notions coincide.
\end{proof}

\begin{remark}
Note that in general the minimal Serre weights are not unique in the sense that there may exist two different weights $V_{a,b}, V_{a',b'}$ such that $k_{\min}(V_{a,b})=k_{\min}(V_{a',b'})=k_{\min}(W(\rho))$. For example, let $\rho$ be so that $\rho_{I_{p,w}}$ is trivial, $1 < b-a <p-2$ and $a \neq 0$. In this case $k_{\min}(W(\rho))=1+pa+b$ and the weights $V_{a,b-a}$ and $V_{b,p-1-(b-a)}$ are both minimal Serre weights. 
\end{remark}

\section{Breuil--M\'{e}zard conjecture}
\label{BMsection}
In the previous sections we have purely focused on modularity, but now will use the Breuil--M\'{e}zard conjecture to relate Serre's conjectures to the existence of crystalline lifts. This involves yet another recipe, described by Kisin in \cite{KisinFMC}. We will adjust notation to our setting.
\par
As before, we let $\rho: G_{\Q} \to \GL_2(\overline{\F}_p)$ be a continuous, odd, irreducible representation and $\rho_{G_p}$ its restriction to $G_{\Q_p}$. To each $\rho_{G_p}$, or indeed any continuous representation $\Localrep: G_{\Q_p} \to \GL_2(\overline{\F}_p)$, Kisin associates a non-negative integer $\mu_{n,m}(\rho_{G_p})$ and we will set $\mu_{n,m}(\rho):=\mu_{n,m}(\rho_{G_p})$. 
\par
We will show that having $\mu_{n,m} (\rho)>0$ corresponds to the existence of a crystalline lift of $\rho_{G_p}$ of some specific weight. In order to do that we use the connection between the Breuil--M\'{e}zard conjecture and the set of Serre weights $W(\rho)$ from \cite{BDJ}, which was first observed in \cite{geekisin}.
We will define a set $\BreuilMezard(\rho)$ consisting of all $V_{m,n+1}$ such that $\mu_{n,m}(\rho)>0$. Then, inspired by \cite[Theorem A(2)]{geekisin}, we will show that Kisin's recipe is compatible with \ref{BDJrecipe}, i.e. that $\BreuilMezard(\rho)=W(\rho)$.
\par
The numbers $\mu_{n,m}(\rho)$ are related to the representations we have previously seen introduced as Serre weights, recall these are just the irreducible $\GL_2(\F_p)$ representations over $\overline{\F}_p$. Kisin's notation is as follows:
 \[
 \sigma_{n,m}=\Sym^n \F^2 \otimes \det^m, \quad n \in \{0,\dots,p-1\},m \in \{ 0, \dots, p-2 \}.
\]
Recall that for \cite{BDJ} we get $V_{a,b}=\det^a \otimes \Sym^{b-1} \F^2, \quad a \in \{0,\dots,p-2\},b \in \{ 1, \dots, p \}$, so that we have
\[
\sigma_{n,m}=V_{m,n+1}.
\]
For any continuous representation $\Localrep: G_{\Q_p} \to \GL_2(\overline{\F}_p)$ the numbers $\mu_{n,m}(\Localrep)$ defined by Kisin provide information about the deformation ring $R_{\text{cr}}^{\square,\psi}(k, \tau,\Localrep)$ (\cite[p. 646]{KisinFMC}).
However, since we are only interested in crystalline lifts of $\rho_{G_p}$, this means that our type $\tau$ as defined in \cite[(1.1)]{KisinFMC} is trivial. This also fixes the determinant condition, hence we will drop the character $\psi$ from our notation in the ring and just use $\DefRing$.
\begin{definition}
Let $\DefRing$ be the ring parameterising crystalline lifts of $\rho_{G_p}$ with Hodge--Tate weights $(0,k-1)$.
\end{definition}
Now we state the Breuil--M\'{e}zard conjecture formally as a theorem and give references for the proof.
\begin{theorem}[Breuil--M\'{e}zard conjecture]
\label{BMconj}
Let $p$ be an odd prime. Suppose $\rho$ is as above then
\[
e(\DefRing/ \varpi)= \sum_{n,m} a_{\text{cr}}(n,m) \mu_{n,m}(\rho).
\]
\end{theorem}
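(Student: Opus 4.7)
The plan is to invoke Kisin's main theorem in \cite{KisinFMC} after matching notation and verifying hypotheses. First I would check that the setup of our statement lies in the cases handled by Kisin: $p$ is odd, the residual representation $\rho_{G_p}\colon G_{\Qp} \to \GL_2(\overline{\F}_p)$ is continuous two-dimensional, we seek lifts that are crystalline with Hodge--Tate weights $(0, k-1)$, and the inertial type is trivial (as spelled out in the paragraph preceding the definition of $\DefRing$). In particular $\DefRing$ agrees with Kisin's $R^{\square,\psi}_{\text{cr}}(k,\tau,\rho_{G_p})$ for trivial $\tau$ and the fixed determinant character $\psi$ implicit in our setting, so the identity in the statement is literally the specialisation of Kisin's formula to this case.

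Second, I would sketch the structure of the proof for orientation, without reproving it. The strategy is the Taylor--Wiles--Kisin patching method: one globalises $\rho_{G_p}$ to a suitable modular Galois representation $\rho$, patches spaces of automorphic forms of varying level to obtain a module $M_\infty$ over a formally smooth deformation ring, and then identifies $e(\DefRing/\varpi)$ with a cycle-theoretic intersection computed from the support of $M_\infty$ localised at the relevant weight. On the other side, the Jordan--H\"older decomposition of the reduction mod~$p$ of the $\GL_2(\Z_p)$-representation $\Sym^{k-2}\cO^2$ produces each Serre weight $\sigma_{n,m}=V_{m,n+1}$ with multiplicity $a_{\text{cr}}(n,m)$, and the corresponding automorphic multiplicities of $\sigma_{n,m}$ in $M_\infty$ are exactly the $\mu_{n,m}(\rho_{G_p})$. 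Matching the two computations gives the formula.

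The main obstacle, of course, lies entirely on the Kisin side: establishing the requisite Cohen--Macaulay and support properties of the patched module, and matching the deformation-theoretic multiplicities of special-fibre components with automorphic multiplicities via Taylor--Wiles primes. All of this is carried out in \cite{KisinFMC}, which is why we quote the result rather than reproduce it; the hypothesis $p$ odd in our statement is inherited directly from the same assumption in Kisin's argument. No further work is needed here beyond recording the translation between the two notational conventions.
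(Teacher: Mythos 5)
There is a genuine gap, and it is about coverage of cases rather than about the mechanics of patching. Your proposal rests on the claim that the identity is ``literally the specialisation of Kisin's formula'' and that the hypothesis $p$ odd ``is inherited directly'' from \cite{KisinFMC}. But Kisin's theorem is not proved there in the generality in which the statement is asserted here: his argument excludes precisely the residual representations $\rho_{G_p}$ that are twists of an extension of the trivial character by $\omega$ (and the global Taylor--Wiles input imposes further restrictions, so that the full statement for every odd $p$, including $p=3$, is not available from \cite{KisinFMC} alone). The theorem as stated in this paper is for \emph{all} continuous odd irreducible $\rho$ and all odd $p$, so one must also invoke the later work completing the conjecture: \cite{Paskunas} and \cite{HuTan} (local methods via the $p$-adic Langlands correspondence, still with exclusions), and \cite{Tung2018}, \cite{Tung2019} for the remaining cases. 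This is exactly how the paper justifies the theorem, and the omission matters for this paper in particular: the excluded family (twists of an extension of $\triv$ by $\omega$, i.e.\ the peu/tr\`es ramifi\'ee situations with $\chi_1\chi_2^{-1}=\omega$) occurs in the weight recipes and in the proof of the main theorem, so you cannot quietly discard it.

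Two smaller points. First, your structural sketch describes only the patching/automorphic side; Kisin's proof of the required (in)equalities also leans on the $p$-adic local Langlands correspondence for $\GL_2(\Q_p)$, so the sketch is incomplete as a description of \cite{KisinFMC}, though this does not affect the logic of a proof by citation. Second, the translation of notation you propose ($\sigma_{n,m}=V_{m,n+1}$, $a_{\text{cr}}(n,m)$ as Jordan--H\"older multiplicities of $\Sym^{k-2}\overline{\F}_p^2$, trivial inertial type and fixed determinant so that $\DefRing$ is Kisin's $R^{\square,\psi}_{\text{cr}}(k,\tau,\rho_{G_p})$) is correct and matches the paper; the fix needed is simply to replace ``quote Kisin'' by ``quote Kisin together with Pa\v{s}k\={u}nas, Hu--Tan and Tung,'' with the case division made explicit.
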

This was proven for all $p$ when $\rho_{G_p}$ is not a twist of an extension of the trivial representation by $\omega$ and in all cases when $p \geq 5$, in \cite{KisinFMC},\cite{Paskunas} and \cite{HuTan} with the remaining cases proven in \cite{Tung2018} and \cite{Tung2019}.
We explain the terms on the right hand side first. Applying the definition on \cite[p. 646]{KisinFMC} to our setting, it follows that the $a_{\text{cr}}(n,m)$ in above formula are simply the Jordan--H{\"o}lder multiplicities of the $\sigma_{n,m}$ in $\Sym^{k-2} \overline{\F}_p^2$. We will define the $\mu_{n,m}(\rho)=\mu_{n,m}(\rho_{G_p})$ in the next section. Finally the expression  $e(\DefRing/ \varpi)$ denotes the Hilbert-Samuel multiplicity of $\DefRing/ \varpi$. 

\par 

The only property of the Hilbert-Samuel multiplicity occurring in the conjecture we will need is that $e(\DefRing/ \varpi) \neq 0$ if and only if $\DefRing/ \varpi$ is non-trivial. As a consequence, for our purposes we are not interested in the exact value of $\mu_{n,m}(\rho)$, we only want to know when it is non-zero.
\begin{definition}
We define
\[
\BreuilMezard({\rho})= \{  V_{m,n+1} \, \vert \, \mu_{n,m}({\rho}) > 0 \}.
\]
\end{definition}

Next we want to show that $\BreuilMezard({\rho})=W({\rho})$.  This will allow us to determine the $k$ such that $\DefRing \neq 0$, so that we indeed find a crystalline lift of type $(0, k_{\min}(W(\rho))-1)$. We will use this to show $k_{\text{cris}}(\rho) \geq k_{\min}(W(\rho))$.

\subsection{Kisin's recipe}
As with the other two recipes, we split into the irreducible and reducible case. 
First assume ${\rho}_{G_p}$ is irreducible, where Kisin sets $\mu_{n,m}({\rho})=1$ if
\[
\rho_{I_p} \sim  \omega^m \otimes \begin{pmatrix}
  \omega_2^{(n+1)}   & 0 \\
  0 &   \omega_2^{p(n+1)} \end{pmatrix},
\]
 and $\mu_{n,m}({\rho})=0$ otherwise. 
 \par
Next we continue with the reducible case, Kisin sets   $\mu_{n,m}({\rho})=0$ unless
\[
\rho_{G_p}  \sim   \omega^m \otimes \begin{pmatrix}
  \omega^{n+1} \mu_{\lambda}   & \ast \\
  0 &   \mu_{\lambda'} \end{pmatrix},
\]
for $\lambda, \lambda' \in \overline{\F}_p^{\times}$ in which case we set $\mu_{n,m}({\rho})=1$ except in the following cases:
\begin{enumerate}
\item If $n=p-1$, $\lambda=\lambda'$ and $\ast$ is peu ramifi\'{e}e (including the trivial case), then $\mu_{n,m}(\rho)=2$.
\item If $n=0$, $\lambda=\lambda'$ and $\ast$ is tr\`{e}s ramifi\'{e}e, then $\mu_{n,m}({\rho})=0$.
\item If $n=p-2$, and $\rho_{G_p}$ is semi-simple, then $\mu_{p-2,m}({\rho})=2$ if $\lambda \neq \lambda'$.
\end{enumerate}
 If $n=p-2$, and $\rho_{G_p}$ is semi-simple and $\lambda=\lambda'$, Kisin does not define $\mu_{p-2,m}({\rho})$ explicitly, but defines it to be the Hilbert-Samuel multiplicity obtained by taking $\tau$ trivial and $k=p$ in his conjecture (\cite[Conjecture 1.1.5]{KisinFMC}). Kisin notes that global considerations suggest the integer $\mu_{p-2,m}({\rho})$ to be 2 if $\lambda=\lambda'$.  However, in \cite[Theorem 1]{Sander}, this multiplicity has been computed explicitly. We will adjust Kisin's recipe to account for that, in line with the errata of \cite{KisinFMC} in \cite[Appendix B]{geekisin}, so:
\begin{enumerate}[resume]
\item If $n=p-2$, and $\rho_{G_p}$ is semi-simple and if $\lambda=\lambda'$ we set $\mu_{p-2,m}({\rho})=4$.
\end{enumerate}

\subsection{Compatibility with modularity}
Next we will show $\BreuilMezard({\rho})=W({\rho})$. For simplicity we will not distinguish between $V_{m+p-1,n}$ and $V_{m,n}$ in the proofs below. We start with the irreducible case.

\begin{proposition}
\label{irredequal}
Suppose ${\rho}_{G_p}$ is irreducible, then  $\BreuilMezard({\rho})=W({\rho})$.
\end{proposition}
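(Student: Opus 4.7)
The plan is a direct character calculation on the inertia subgroup. After an outer twist we may assume $\rho_{I_p} \sim \begin{pmatrix} \omega_2^b & 0 \\ 0 & \omega_2^{pb} \end{pmatrix}$ for some $1 \leq b \leq p-1$, so that Definition \ref{newweights} gives $W(\rho) = \{V_{0,b},\, V_{b-1,\,p+1-b}\}$. Since both $W(\rho)$ and $\BreuilMezard(\rho)$ depend only on $\rho_{I_p}$, this reduction is harmless.

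By Kisin's recipe in the irreducible case, $V_{m,n+1} \in \BreuilMezard(\rho)$ iff
\[
\rho_{I_p} \sim \omega^m \otimes \begin{pmatrix} \omega_2^{n+1} & 0 \\ 0 & \omega_2^{p(n+1)} \end{pmatrix}.
\]
Using $\omega = \omega_2^{p+1}$, this amounts to the equality of unordered pairs
\[
\{\omega_2^{m(p+1)+n+1},\, \omega_2^{m(p+1)+p(n+1)}\} = \{\omega_2^b,\, \omega_2^{pb}\}.
\]
Since $\omega_2$ has order $p^2 - 1$, this is an equality of pairs of residues modulo $p^2 - 1$; because Frobenius (multiplication of the exponent by $p$) interchanges the two entries in each pair, it reduces to
\[
m(p+1) + n + 1 \equiv b \pmod{p^2-1} \quad \text{or} \quad m(p+1) + n + 1 \equiv pb \pmod{p^2-1},
\]
subject to $0 \leq m \leq p-2$ and $0 \leq n \leq p-1$.

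I then solve each congruence. One checks that $(m,n) \mapsto m(p+1) + n + 1$ is injective on the allowed range, with image contained in $\{1,\ldots, p^2-2\}$, so each congruence has at most one solution and no wraparound modulo $p^2 - 1$ is possible. The first is solved uniquely by $(m,n) = (0, b-1)$, producing $V_{0,b}$; the second, after writing $pb = (b-1)(p+1) + (p+1-b)$, by $(m,n) = (b-1, p-b)$, producing $V_{b-1,\,p+1-b}$. These are exactly the two members of $W(\rho)$, so $\BreuilMezard(\rho) = W(\rho)$. The only (minor) obstacle is verifying that the range check rules out spurious solutions; everything else is elementary modular arithmetic with $\omega = \omega_2^{p+1}$.
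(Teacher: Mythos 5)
Your proof is correct and follows essentially the same route as the paper: both arguments match Kisin's irreducible-case recipe against the BDJ set by writing $\omega=\omega_2^{p+1}$ and using that Frobenius (multiplying exponents by $p$ mod $p^2-1$) swaps the two characters, which yields exactly the two weights $V_{0,b}$ and $V_{b-1,p+1-b}$. The only cosmetic difference is that you untwist $\rho_{I_p}$ to its normalized form and solve the congruences explicitly, whereas the paper keeps the twist $\omega^m$ and twists the weight sets instead; both reductions rest on the same twist compatibility of $W(\rho)$ from \cite[Corollary 2.11(2)]{BDJ}.
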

\begin{proof}
We obtain
\[
\BreuilMezard({\rho})= \{ V_{m,n+1}, V_{m+n,p-n} \}. 
\]
Now for
\[
\rho_{I_p} \sim   \begin{pmatrix}
  \omega_2^{(n+1)}   & 0 \\
  0 &   \omega_2^{p(n+1)} \end{pmatrix},
\]
we have $W({\rho})=\{ V_{0,n+1}, V_{n,p-n} \} $ so that if
\[
\rho_{I_p} \sim  \omega^m \otimes \begin{pmatrix}
  \omega_2^{(n+1)}   & 0 \\
  0 &   \omega_2^{p(n+1)} \end{pmatrix},
\]
we have $W({\rho})=\{ V_{m,n+1}, V_{m+n,p-n} \}$. This completes the proof for the irreducible case.
\end{proof}

\begin{proposition}
\label{redequal}
Suppose ${\rho}_{G_p}$ is reducible, then  $\BreuilMezard({\rho})=W({\rho})$.
\end{proposition}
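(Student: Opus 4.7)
The plan is to enumerate, for each of the eight sub-cases in \eqref{BDJrecipe}, the pairs $(n, m)$ for which Kisin's recipe yields $\mu_{n,m}(\rho) > 0$, and to verify in each case that the resulting set $\{V_{m, n+1}\}$ coincides with $W(\rho)$. Both recipes are manifestly compatible with twisting by a power of $\omega$, so I would assume without loss of generality that
\[
\rho_{G_p} \sim \begin{pmatrix} \omega^b \mu_\lambda & \ast \\ 0 & \mu_{\lambda'} \end{pmatrix}, \qquad 1 \le b \le p-1.
\]

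A pair $(n, m) \in \{0, \ldots, p-1\} \times \{0, \ldots, p-2\}$ contributes $V_{m, n+1}$ to $\BreuilMezard(\rho)$ exactly when $\rho_{G_p}$ can be written as $\omega^m \otimes \begin{pmatrix} \omega^{n+1}\mu_{\tilde\lambda} & \ast \\ 0 & \mu_{\tilde\lambda'} \end{pmatrix}$ and Kisin's special rules do not force $\mu_{n,m}(\rho) = 0$. Matching the diagonal characters yields the congruences $m \equiv 0$ and $m + n + 1 \equiv b \pmod{p-1}$, forcing $m = 0$ and pinning down $n$; since $n + 1$ ranges over $\{1, \ldots, p\}$, the residue class $b \pmod{p-1}$ has two representatives precisely when $b = 1$. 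When $\rho_{G_p}$ is split I may additionally swap the diagonal order, yielding a second family $m = b$, $n + 1 \equiv p - 1 - b \pmod{p-1}$, which has two representatives precisely when $b = p - 2$.

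These two enumerations reproduce the generic BDJ predictions $\{V_{0, b}, V_{b, p-1-b}\}$ (split) and $\{V_{0, b}\}$ (non-split) for $1 < b < p - 2$, and the three boundary anomalies in BDJ correspond exactly to the overlaps identified above: $b = 1$ yields the extra weight $V_{0, p}$, $b = p - 2$ in the split case yields the extra weight $V_{p-2, p}$, and at $b = p - 1$ both orientations collapse into the single weight $V_{0, p-1}$. The only moment at which Kisin's multiplicity rules actually suppress a weight, rather than merely adjust its multiplicity, is rule (2): if $n = 0$, $\lambda = \lambda'$ and $\ast$ is très ramifiée then $\mu_{n,m}(\rho) = 0$. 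This eliminates $V_{0, 1}$ from the $b = 1$, $\chi_1\chi_2^{-1} = \omega$, très ramifiée line of BDJ and matches $W(\rho) = \{V_{0, p}\}$; rules (1), (3), (4) only raise multiplicities beyond one and are therefore irrelevant to membership in $\BreuilMezard(\rho)$.

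The main obstacle is purely administrative: aligning the orientation-swap, the boundary overlaps of residue classes, and Kisin's exceptional rules across all eight BDJ sub-cases. The $p = 3$ edge case, where $1 = p - 2$, needs separate attention because both orientations then each contribute two weights, correctly reproducing $\{V_{0,1}, V_{0,3}, V_{1,1}, V_{1,3}\}$. The irreducible case treated in Proposition \ref{irredequal} provides a template, though the reducible case involves considerably more boundary phenomena.
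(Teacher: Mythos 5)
Your proposal is correct and takes essentially the same approach as the paper: a twist-normalized, case-by-case combinatorial matching of Kisin's recipe against the BDJ list, using $\omega^{p}=\omega$ for the boundary overlaps $n+1\in\{1,p\}$, the swap of the diagonal characters in the split case, and the fact that only Kisin's rule (2) (the $n=0$, tr\`es ramifi\'ee case) actually removes a weight. The paper carries out the same computation, merely organizing it by listing $\BreuilMezard(\rho)$ case by case in $n$ and then comparing with \eqref{BDJrecipe} via twist compatibility.
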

\begin{proof}
In the reducible case it is a bit more complicated to determine $\BreuilMezard({\rho})$. Most of the time we just get one non-zero element, $V_{m,n+1}$, in $\BreuilMezard({\rho})$. But we note that if $\rho$ is split, then we have
\[
\omega^{m+n+1} \mu_{\lambda} \oplus \omega^m \mu_{\lambda'}=\omega^{m+n+1}(\omega^{p-1-(n+1)} \mu_{\lambda'} \oplus \mu_{\lambda}).
\]
Given this, computing $\BreuilMezard({\rho})$ is just a combinatorial exercise, recalling $\omega^{p-1}=1$ and thus $\omega^{p}=\omega$ (which is relevant since we allow $n$ in a range including both $0$ and $p-1$). We obtain

\[
\BreuilMezard({\rho})= \begin{cases} 
V_{m,n+1}, & 0 < n<p-2, \rho_{G_p} \text{ non-split}, \\
V_{m,n+1}, V_{m+n+1,p-2-n}, & 0< n < p-3, \rho_{G_p} \text{ split},\\
V_{m,p-2}, V_{m+p-2,p}, V_{m+p-2,1}, & n=p-3, \rho_{G_p} \text{ split}, p>3, \\
V_{m,n+1}, & n=p-2, \\
V_{m,1},V_{m,p},V_{m+1,1}, V_{m+1,p-1}, & n=0,p-1, \rho_{G_p} \text{ split}, p=3, \\
V_{m,1},V_{m,p},V_{m+1,p-2}, & n=0,p-1, \rho_{G_p} \text{ split}, p>3, \\
V_{m,1},V_{m,p}, & n=0,p-1, \rho_{G_p}  \text{ non-split and not tr\`{e}s ramifi\'{e}e,} \\
V_{m,p}, & n=0,p-1, \rho_{G_p}  \text{ tr\`{e}s ramifi\'{e}e.}  
\end{cases}
\]
The result now follows easily from comparing the above to \eqref{BDJrecipe} and recalling that $ \omega^m \otimes V \in W(\rho)$ is equivalent to $V \in W(\omega^m \otimes \rho)$ due to the compatibility with twists, proven in \cite[Corollary 2.11(2)]{BDJ}.
\end{proof}

\section{The main result}
\label{finalsection}
In this final section we will tie it all together. The main result left to prove is the equality $k_{\min}(W(\rho))=k_{\text{cris}}(\rho)$, which we will do using Proposition \ref{irredequal} and Proposition \ref{redequal} and Theorem \ref{BMconj}. We note here that this is a direct consequence of Conjecture 4.2.2 in \cite{Gee} for $F=\Q$. We refer also to the discussion in the final paragraph of \cite[Section 4.2]{Gee}, however we do not need the assumptions therein with the recent progress on the Breuil--M\'{e}zard  conjecture as made evident in Theorem \ref{BMconj}.

\begin{theorem}
\label{crisequal}
Let $p$ be an odd prime and let
\[
\rho: {G_{\Q}} \to \GL_2(\overline{\F}_p)
\]
be a continuous, odd, irreducible representation. Let $k_{\min}(W(\rho))$ be defined as in Definition \ref{kmin} and  $k_{\text{cris}}(\rho)$ as in Definition \ref{kcris}. Then
\[
k_{\min}(W(\rho))=k_{\text{cris}}(\rho).
\]
\end{theorem}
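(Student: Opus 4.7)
The plan is to use Theorem \ref{BMconj} together with the compatibility $\BreuilMezard(\rho) = W(\rho)$ from Propositions \ref{irredequal} and \ref{redequal} to turn the question of existence of a crystalline lift of weight $k$ into a Jordan--H\"older intersection condition on $W(\rho)$, and then take minima on both sides.

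First I would note that $\rho$ has a crystalline lift of weight $k$ if and only if $R(k,\rho) \neq 0$. Since $R(k,\rho)$ is a complete local noetherian $\mathcal{O}$-algebra with residue field $\F$ and $\varpi$ lies in its maximal ideal, this is equivalent to $R(k,\rho)/\varpi \neq 0$, and hence to $e(R(k,\rho)/\varpi) > 0$. Applying Theorem \ref{BMconj},
\[
e(R(k,\rho)/\varpi) = \sum_{n,m} a_{\text{cr}}(n,m)\,\mu_{n,m}(\rho),
\]
and since every term on the right is a non-negative integer, the sum is positive exactly when there exists a pair $(n,m)$ with both $a_{\text{cr}}(n,m) > 0$ and $\mu_{n,m}(\rho) > 0$.

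Next I would unpack these two positivity conditions. By definition of $a_{\text{cr}}(n,m)$ the first is equivalent to $V_{m,n+1} = \sigma_{n,m} \in \JH(\Sym^{k-2}\overline{\F}_p^2)$, and by definition of $\BreuilMezard(\rho)$ the second is equivalent to $V_{m,n+1} \in \BreuilMezard(\rho)$. Propositions \ref{irredequal} and \ref{redequal} identify $\BreuilMezard(\rho)$ with $W(\rho)$, so jointly the two conditions read
\[
W(\rho) \cap \JH(\Sym^{k-2}\overline{\F}_p^2) \neq \emptyset.
\]
Stringing the equivalences together, $\rho$ admits a crystalline lift of weight $k$ if and only if this intersection is non-empty. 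Taking the minimum over $k \geq 2$ on both sides of this equivalence then gives $k_{\text{cris}}(\rho) = k_{\min}(W(\rho))$.

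The main obstacle is the input Theorem \ref{BMconj}: it is far from trivial and relies on the combined work of several papers cited after its statement (in particular for $\rho_{G_p}$ a twist of an extension of $\triv$ by $\omega$, which is exactly the case where Kisin's recipe contains the peu/tr\`{e}s ramifi\'{e}e dichotomy). Once this is granted, the argument is purely formal, the one small verification being that non-triviality of $R(k,\rho)/\varpi$ really corresponds to existence of a crystalline lift in characteristic zero; this follows from the interpretation of $R(k,\rho)$ as the universal crystalline deformation ring of prescribed Hodge--Tate weights, whose non-vanishing is by definition equivalent to the existence of such a lift.
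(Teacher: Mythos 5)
Your proposal is correct and follows essentially the same route as the paper: both arguments combine Theorem \ref{BMconj} with the identification $\BreuilMezard(\rho)=W(\rho)$ from Propositions \ref{irredequal} and \ref{redequal}, and use positivity of the terms $a_{\text{cr}}(n,m)\,\mu_{n,m}(\rho)$ to translate non-vanishing of $e(\DefRing/\varpi)$ into $W(\rho)\cap\JH(\Sym^{k-2}\overline{\F}_p^2)\neq\emptyset$. The only cosmetic difference is that you phrase this as a single equivalence valid for every $k\geq 2$ and then take minima, whereas the paper proves the two inequalities $k_{\min}(W(\rho))\leq k_{\text{cris}}(\rho)$ and $k_{\min}(W(\rho))\geq k_{\text{cris}}(\rho)$ separately.
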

\begin{proof}
Suppose first we have a crystalline lift of weight $k_{\text{cris}}(\rho)$, then we know $\DefRing$ is non-trivial, hence $e(\DefRing/ \varpi) \neq 0$ for $k=k_{\text{cris}}(\rho)$. By Theorem \ref{BMconj}, this means there must be $(m,n)$ such that $a_{\text{cr}}(n,m)>0$ and $\mu_{n,m}(\rho)>0$, so $V_{m,n+1} \in \BreuilMezard(\rho)=W(\rho)$. Since $a_{\text{cr}}(n,m)>0$, this $V_{m,n+1}$ must be a Jordan--H{\"o}lder constituent of $\Sym^{k_{\text{cris}}(\rho)-2} \overline{\F}_p^2$, so that $k_{\min}(W(\rho)) \leq k_{\text{cris}}(\rho)$. 
\par
Conversely suppose $V_{m,n+1} \in W(\rho)$ is such that $k_{\min}(W(\rho))=k_{\min}(V_{m,n+1})$. Then $V_{m,n+1} \in \BreuilMezard(\rho)$, so that $\mu_{n,m}(\rho)>0$ and that $a_{\text{cr}}(n,m)>0$, so again by Theorem \ref{BMconj} we have $e(\DefRing/ \varpi) \neq 0$. This means  $\DefRing$ is non-trivial for $k=k_{\min}(V_{m,n+1})$, hence we have a crystalline lift of weight $k_{\min}(W(\rho))$. This means $k_{\min}(W(\rho)) \geq k_{\text{cris}}(\rho)$. 
\end{proof}

Now the main result follows from Theorem \ref{crisequal} and Theorem \ref{finaltheorem}.

\begin{theorem}
\label{mainthm}
Let $\rho: G_{\Q} \to \GL_2(\F_p)$ be an continuous, odd and irreducible Galois representation for $p$ an odd prime. Then
\[
k(\rho)=k_{\min}(W(\rho))=k_{\text{cris}}({\rho}).
\]
\end{theorem}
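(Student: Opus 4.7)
The plan is to simply chain together the two equalities already established in Theorem \ref{finaltheorem} and Theorem \ref{crisequal}, since together they immediately yield the three-way equality in the statement.

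First, I would invoke Theorem \ref{finaltheorem}, which directly gives the first equality $k(\rho) = k_{\min}(W(\rho))$. This is the purely representation-theoretic part of the argument, handled case-by-case by comparing Serre's explicit formulas with the minima of $k_{\min}(V_{a,b})$ computed via Proposition \ref{proofkmin} over the weight set $W(\rho)$ described in Definition \ref{newweights}.

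Second, I would invoke Theorem \ref{crisequal} for the equality $k_{\min}(W(\rho)) = k_{\text{cris}}(\rho)$, whose proof rests on the Breuil--M\'{e}zard conjecture (Theorem \ref{BMconj}) together with the identification $\BreuilMezard(\rho) = W(\rho)$ from Propositions \ref{irredequal} and \ref{redequal}. The key mechanism there is that $\mu_{n,m}(\rho) > 0$ combined with $a_{\text{cr}}(n,m) > 0$ detects both modularity of a Serre weight and non-triviality of the crystalline deformation ring $\DefRing/\varpi$, so existence of a crystalline lift of weight $k$ is equivalent to some weight $V_{m,n+1} \in W(\rho)$ appearing as a Jordan--H\"{o}lder constituent of $\Sym^{k-2}\overline{\F}_p^2$.

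Concatenating the two equalities gives $k(\rho) = k_{\min}(W(\rho)) = k_{\text{cris}}(\rho)$, which is the claim. There is no real obstacle at this stage: all technical content has been absorbed into the preceding theorems. The only subtlety worth flagging in the write-up is that $k_{\text{cris}}(\rho)$ depends only on $\rho|_{G_{\Q_p}}$ by Definition \ref{kcris}, and so does $W(\rho)$ by \cite[Proposition 3.13]{BDJ}; thus the final equality is indeed a purely local statement, consistent with the observations made in the introduction and with the discussion of \cite[Section 4.2]{Gee} referenced just before Theorem \ref{crisequal}.
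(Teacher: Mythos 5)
Your proposal is correct and matches the paper's own proof exactly: the paper also deduces Theorem \ref{mainthm} immediately by combining Theorem \ref{finaltheorem} with Theorem \ref{crisequal}. Nothing further is needed.
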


\bibliography{references}
\bibliographystyle{plain}
\end{document}